\documentclass[12pt,a4paper]{article}
\usepackage[active]{srcltx}
\usepackage{amsfonts}
\usepackage{amsmath}
\usepackage{amsbsy}
\usepackage{amsxtra}
\usepackage{latexsym}
\usepackage{amssymb}
\usepackage{url}
\usepackage{cite}
\usepackage{pstricks,pst-node}
\textwidth 16 cm
\textheight 22 cm
\synctex=1

\makeatletter
\g@addto@macro\thesection.
\makeatother

\newtheorem{theorem}{Theorem}

\newtheorem{definition}{Definition}

\DeclareMathOperator{\intr}{int}

\DeclareMathOperator{\bdr}{bdr}

\DeclareMathOperator{\gauge}{gauge}

\newcommand{\R}{\mathbb R}

\newenvironment{proof}{{\noindent\bf Proof.}}{\hfill$\Box$\\}

\begin{document}

\title{Geometry of the proper asymmetric norm}
\author{A. B. N\'emeth\\Faculty of Mathematics and Computer Science\\Babe\c s 
Bolyai University, Str. Kog\u alniceanu nr. 1-3\\RO-400084
Cluj-Napoca, Romania\\email: nemab@math.ubbcluj.ro}
\date{}
\maketitle

\begin{abstract} 

The convex geometric approach in the study of asymmetric norms can be useful
in their deeper investigation. The note illustrates this in
the case of so called proper asymmetric norms, notion revealed in
analytical context in \cite{NemethNemeth2020} and \cite{Nemeth2020}.   

\end{abstract}

\section{Introduction}

While searching for asymmetric vector norms \cite{NemethNemeth2020} and mutually
polar retractions on convex cones \cite{Nemeth2020}, retractions on one dimensional
cones play a special role. These retractions are intrinsically  related to some
special, so called \emph{proper} asymmetric norms. We will show in this note
that the analytic conditions on these 
asymmetric norms have simple interpretation in convex geometric setting.  

\begin{definition}\label{assnor}\cite{Cobzas2013}
Let $X$ be a real vector space.

The functional $p:\,X\to \R_+=[0,+\infty)$ is said an \emph{asymmetric norm}
if the following conditions hold:
\begin{enumerate} 
\item $p$ is positive homogeneous, i.e., $q(tx)=tq(x),\, \forall t\in \R_+,\,\,\forall x\in X$;
\item $p$ is subadditive, i.e., $p(x+y)\leq p(x)+p(y),\,\,\forall x,\,y \in X$;
\item If $p(x)=p(-x)=0$ then $x=0$. 
\end{enumerate}
\end{definition}

The theory of asymmetrically normed spaces concerns merely on
the investigation and  relation of the various $T_0$ topologies 
induced by asymmetric norms on $X$ and the related functional analytic problems. 
In our approach in \cite{NemethNemeth2020} and \cite{Nemeth2020}
the difference lies in the fact that we consider a priori a norm on $X$ and
that in the Definition \ref{assnor} we have supposed $p$ continuous.

\begin{definition}\label{pran}
Adapting the terminology from \cite{NemethNemeth2020},
an asymmetric norm $q:\,X\to \R_+$ is said \emph{proper}
if there exists an element $u\in X$ with $q(u)=1$, such that 
\begin{equation}\label{pran1}
q(x-q(x)u)=0\;\;\forall\;x\in X.
\end{equation}
\end{definition}

The note is organized as follows:

After the definition of cones and ordering in a short Section 2,
in Section 3 we give the complete geometric characterization 
of the proper asymmetric norm. In Section 4 we use the
result of the preceding section in the study of mutually polar
retractions on convex cones of a normed space. The obtained results 
generalize some results in \cite{NemethNemeth2020} and
\cite{Nemeth2020}. 
\section{Cones and orderings}

Let $X$ be a real vector space. The set $K\subset X$ is called a 
\emph{pointed convex cone} if (i) $x,\,y\in K\,\Rightarrow x+y\in K$,
(ii) $x\in K,\,t\in \R_+=[0,+\infty)\;\Rightarrow tx\in K$, and
(iii) $K\cap (-K)=\{0\}.$

The relation
$$ x\leq_Ky \;\Leftrightarrow\; y-x\in K$$
defines a reflexive, transitive and anti-symmetrical order relation on $K$.

In the convex geometry a cone in a normed space is called \emph{proper}
if it is convex, pointed, closed and possesses interior points.
(In \cite{Conradie2015} the proper cone means pointed cone.)


\section{The geometry of asymmetric norms}

We shall be in keeping with the terminology in \cite{Cobzas2013} and
\cite{Conradie2015}. 

Let $X$ be a real vector space. 

The set $A\subset X$ is  called \emph{absorbent} if
$$\forall \,x\in X\; \exists t\in \R^+=(0,+\infty) \;\textrm{such that}\;x\in tA.$$

The set $A$ is \emph{absorbent with respect to} $a\in A$, if $A-a$ is absorbent.

In the presence of a locally convex topology on $X$, every non-empty
convex set $C$ in $X$ is absorbent with respect to every its interior point.

If $A$ is  absorbent, then the functional $g:X\to \R_+$ defined with
$$  g(x)=\inf\{t\in \R^+:\,x\in tA\}$$ 
is called the \emph{gauge of} $A$ and is denoted $\gauge A.$

If $A-a$ with $a\in A$ is absorbent, then
$\gauge(A-a)$ is said the \emph{gauge of $A$ with respect to its point $a$}.

If $p:X\to \R_+$ is the asymmetric norm
in Definition \ref{assnor}, then we put
$$B'_p=\{x\in X:\,p(x)<1\},$$ 
$$B_p=\{x\in X:\,p(x)\leq 1\},$$
and
$$\Sigma_p=\{x\in X:\,p(x)=1\}.$$

Obviously, $B'_p$ and $B_p$ are both convex and absorbent and
$p$ is the gauge of both $B'_p$ and $B_p$.

The family
$$ \{tB'_p:\,t\in \R^+\}$$
form the neighborhood basis of a $T_0$ topology in $X$ which
we shall call the $p$-\emph{topology}.

The functional $p^s:X\to \R_+$ defined by
$$p^s(x)=\max \{p(x),p(-x)\}$$
is a norm on $X$ called the $p^s$-\emph{norm}.

Since $p\leq p^s$, the functional $p$ is $p^s$-continuous.

The set 
$$K_p=\{x\in X:\, p(x)=0\}$$
is a pointed convex cone.
Indeed $x,y\in K_p \Rightarrow x+y\in K_p$,
$x\in K_p,\;t\in \R_+\;\Rightarrow tx\in K_p$ and
 $x,\,-x\in K_p\,\Rightarrow x=0.$
$K_p$ is also $p^s$-closed since $p$ is $p^s$-continuous.

\begin{theorem}\label{main}
Suppose that $X$ is a real vector space and $q:X\to \R_+$
is an asymmetric norm. Then the following assertions are
equivalent:
\begin{enumerate}
\item [(i)] $q$ is proper, that is there exists $u\in X$ with
$q(u)=1$ such that
$$q(x-q(x)u)=0\;\forall \; x\in X.$$
\item [(ii)]
\begin{equation}\label{sig}
q(x-u)=0\;\forall \; x\in \Sigma_q.
\end{equation}
\item [(iii)] $\{x-q(x)u: \;x\in X\}=K_q$, where $K_q=\{x\in X:\,q(x)=0\}$.
\item [(iv)] There exists a norm on $X$ such that $q$ is
the gauge with respect to an interior point of
a proper cone.
\end{enumerate}
 \end{theorem}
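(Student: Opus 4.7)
The plan is to prove the cycle (i) $\Rightarrow$ (ii) $\Rightarrow$ (iii) $\Rightarrow$ (i) and then handle (i) $\Leftrightarrow$ (iv) as a separate pair.

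The cycle is routine and relies entirely on positive homogeneity. For (i) $\Rightarrow$ (ii), I simply restrict the proper identity to $x\in\Sigma_q$, where $q(x)=1$. For (ii) $\Rightarrow$ (iii), I split on the value of $q(x)$: when $q(x)>0$, rescale $x$ to $x/q(x)\in\Sigma_q$, apply (ii), and multiply back using positive homogeneity; when $q(x)=0$, the element $x=x-q(x)u$ sits trivially in $K_q$. The reverse inclusion drops out from $z=z-q(z)u$ for any $z\in K_q$. Finally, (iii) $\Rightarrow$ (i) is a direct re-reading of the defining relation.

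The direction (iv) $\Rightarrow$ (i) should come out of routine gauge calculus once $u:=-a$ is chosen. A brief infimum computation gives $q(-a)=1$: the set $\{t>0:(t-1)a/t\in K\}$ contains every $t\geq 1$ because $a\in K$ and $K$ is a cone, but excludes every $t<1$ because $-a\notin K$ by pointedness (and $a\neq 0$ since $0\notin\intr K$). Writing $q(x)=\inf\{t>0:x+ta\in K\}$ and using the closedness of $K$ forces $x+q(x)a\in K$, which is exactly the identity $q(x-q(x)u)=0$.

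The substantive direction is (i) $\Rightarrow$ (iv), and this is where I expect the main obstacle. The natural choice is the norm $q^s$, the cone $K:=K_q$, and the distinguished point $a:=-u$. That $K_q$ is a $q^s$-closed pointed convex cone is already recorded in the preamble, and the identity $q=\gauge(K_q+u)$ will drop out by unfolding: $\gauge(K_q+u)(x)=\inf\{t>0:q(x-tu)=0\}$ equals $q(x)$, with one direction from taking $t=q(x)$ in (i) and the other from subadditivity, since $q(x)\leq q(x-tu)+tq(u)=t$ whenever $q(x-tu)=0$. The delicate step is showing $-u\in\intr_{q^s}K_q$, which hinges on a preliminary identity: applying (i) to $x=-u$ yields $q(-(1+q(-u))u)=0$, and positive homogeneity then forces $(1+q(-u))q(-u)=0$, so $q(-u)=0$. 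With this in hand, for any $y$ with $q^s(y+u)<1$, set $s:=q(y+u)<1$ and apply (i) to $y+u$ to obtain $y+(1-s)u\in K_q$; subadditivity combined with $q(-u)=0$ then gives $q(y)\leq q(y+(1-s)u)+(1-s)q(-u)=0$, so $y\in K_q$. This places the open $q^s$-ball of radius one around $-u$ inside $K_q$, completing the construction.
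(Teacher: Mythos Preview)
Your proof is correct. The overall architecture matches the paper's: the equivalences among (i), (ii), (iii) are dismissed as routine, and the substantive work is the bridge to (iv) via the norm $q^s$, the cone $K_q$, and the distinguished point $-u$.

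The execution of (i)$\Rightarrow$(iv), however, follows a different technical route. The paper argues geometrically: from (ii) it observes that $q$ vanishes on $\Sigma_q-u$, which it identifies with the boundary of $B_q-u$, and then asserts that a non-negative convex functional vanishing on the boundary of a convex set must vanish on the whole set, yielding $B_q-u\subset K_q$; the reverse inclusion comes from subadditivity, so $B_q=K_q+u$ and $-u\in\intr K_q$ because $0\in B'_q$. Your path is more analytic and, arguably, more careful: you first isolate the key identity $q(-u)=0$ by plugging $x=-u$ into (i) and using positive homogeneity, and only then carry out an explicit ball computation via subadditivity to place $\{y:q^s(y+u)<1\}$ inside $K_q$. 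This avoids the paper's somewhat underjustified ``vanishing on the boundary implies vanishing on the set'' step, which is delicate because $B_q-u$ need not be bounded. For (iv)$\Rightarrow$(i) the difference is similar in flavor: the paper reads off (ii) from the geometric identity $\Sigma_q=\bdr K+u$, while you compute the gauge directly as $\inf\{t>0:x+ta\in K\}$ and invoke closedness of $K$ to land $x+q(x)a$ in $K$. Both directions reach the same conclusions; your version is more self-contained, the paper's more pictorial.

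One small point: in your gauge identity $q=\gauge(K_q+u)$, the step ``take $t=q(x)$'' for the inequality $\gauge(K_q+u)(x)\le q(x)$ needs a word when $q(x)=0$, since the infimum runs over $t>0$. This is harmless---once you know $q(-u)=0$, subadditivity gives $q(x-tu)\le q(x)+tq(-u)=0$ for every $t>0$, so the infimum is indeed $0$---but it is worth saying explicitly.
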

\begin{proof}

The relations (i) $\;\Leftrightarrow\;$ (ii) $\;\Leftrightarrow \;$ (iii) are obvious.

(ii) $\;\Rightarrow\;$ (iv).

From (\ref{sig}) we have that $q$ is zero on the  set $\Sigma_q-u$. This set is nothing else as
the translation of the boundary of $B_q$ with $-u$. Since $B_q-u$ is a convex set
and the non-negative convex functional $q$ vanishes on the boundary of $B_q-u$, it
must be zero on this convex set. Hence
\begin{equation}\label{sub}
B_q-u\subset K_q.
\end{equation}

$B'_q-u$ is $q^s$ open and it is contained in the $q^s$-closed pointed cone $K_q$,
whereby this cone is $q^s$-proper.

Let $k\in K_q$ be arbitrary. Then $y=k+u\in B_q$  since $q(y)=q(k+u)\leq q(k)+q(u)=q(u)=1.$
Hence $k=y-u$, where $y\in B_q$. It follows that $K_q\subset B_q-u.$
Together with (\ref{sub}) we have
$$B_q-u\subset K_q\subset B_q-u.$$
Hence $B_q$ is the translation with $-u$ of the $q^s$-proper
cone $K_q$, and 
$$q=\gauge(K_q+u).$$

$-u$ is a $q^s$-interior point of $B_q-u=K_q.$
Thus $q$ is the gauge of the $q^s$-proper cone $K_q$ in a normed space with respect to
its interior point $-u$. This is nothing else as (iv).

(iv) $\;\Rightarrow\;$ (ii).

Suppose that $X$ is a normed vector space and $K\subset X$ is a proper cone.

Take $-u\in \intr K$ and put 
$$q=\gauge (K+u),$$
that is let $q$ be the gauge of $K$ with respect to its interior point $-u$.
Then $q$ is an asymmetric norm,
$$B_q= K+u,$$
and
$$K_q=K.$$

Then $\Sigma_q= \bdr(K+u)=\bdr K+u$, hence $u\in \Sigma_q$ and $q(u)=1$.

$\forall\, x\in \Sigma_q$, $x-u\in \bdr K \subset K,$ hence
$q(x-u)=0,$ which is nothing else as (\ref{sig}), that is, 
we have condition (ii) fulfilled for $q$.

\end{proof}

In conclusion {\bf an asymmetric norm is proper if and only if it is the gauge
of a proper cone with respect to its interior point.}

\section{Application to mutually polar retractions}

Let $X$ be a real normed space. The mapping $S:X\to X$ is called \emph{idempotent}
if $S^2=S$.

\begin{definition}\label{retract} 
The mapping $T:\,X\to X$ is called
\emph{retraction} if:
\begin{enumerate}
\item [(i)] It is a continuous idempotent mapping;
\item [(ii)] It is \emph{positive
homogeneous}, that is, $T(tx)=tTx$ for every $x\in X$ and every $t\in \R_+=[0,+\infty)$;
\item [(iii)] $T(X)$ is a non-empty, non-zero, closed pointed convex cone.
\item [(iv)] $Tx\in \bdr T(X)$ for any $x\in X\setminus T(X).$
\end{enumerate}
\end{definition} 

If $S:X\to X$ is a retract with $K=S(X)$, then it is called \emph{subadditive} if
$$S(x+y)\leq_K Sx+Sy.$$
We will call a retract subadditive, if it is subadditive with respect to the
ordering its range endows.

\begin{definition}\label{muture}
Let $X$ be a real normed space, $0$ its zero mapping and $I$ its identity mapping. 
The mappings $Q,\,R:\,X\to X$ are called
\emph{mutually polar retractions} if 
\begin{enumerate}
\item [(i)] $Q$ and $R$ are retractions, 
\item[(ii)] $Q+R=I$,
\item[(iii)] $QR=RQ=0$.
\end{enumerate}
\end{definition}

Next we use the notations and definitions
in the preceding section. 

\begin{theorem}\label{as1}
Let $Q,\,R:X\to X$ be a pair of mutually polar retractions with $Q(X)=M$ and
$R(X)=N$.
If $\dim N=1$ and $R$ is subadditive then
\begin{enumerate}
\item [(i)] The functional $q$ from the representation $Rx=q(x)u$ with $q(u)=1$, is a
proper asymmetric norm.
\item [(ii)] $M=K_q=\{x\in X:q(x)=0\}$ is a proper cone, $-u\in \intr M$.
\item [(iii)] $q=\gauge (M+u).$
\item [(iv)] $Q$ is subadditive.
\end{enumerate}
\end{theorem}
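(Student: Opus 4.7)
The starting observation is that $N$, being a one-dimensional closed pointed convex cone, is a ray $\R_+u$; so there is a well-defined non-negative scalar $q(x)$ with $Rx=q(x)u$, and applying idempotence to $u\in R(X)$ gives $Ru=u$ and hence $q(u)=1$. The identity driving the whole argument is
$$ x-q(x)u \;=\; x-Rx \;=\; Qx, $$
which reduces the properness condition (\ref{pran1}) for $q$ to the hypothesis $RQ=0$ of Definition~\ref{muture}. The overall plan is to combine this with subadditivity of $R$ and the characterization in Theorem~\ref{main}.

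For (i), positive homogeneity and subadditivity of $q$ transfer directly from the corresponding properties of $R$, since the order $\leq_N$ on non-negative multiples of $u$ is ordinary $\leq$ on the scalar coefficients. For definiteness, $q(x)=q(-x)=0$ gives $Rx=R(-x)=0$, hence $Qx=x$ and $Q(-x)=-x$; both $x$ and $-x$ then lie in $M$, and pointedness of $M$ forces $x=0$. Properness of $q$ then follows from the displayed identity together with $RQ=0$.

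For (ii) and (iii), idempotence of $Q$ yields $M=Q(X)=\{x:Qx=x\}=\{x:Rx=0\}=K_q$, which is convex, pointed, and closed. Theorem~\ref{main} now delivers $q=\gauge(K_q+u)=\gauge(M+u)$, which is (iii). It remains to show that $M$ has non-empty interior and that $-u$ is an interior point \emph{in the original norm of $X$}, not merely in $q^s$ as in the proof of Theorem~\ref{main}. Here I use that continuity of the retraction $R$ makes $q$ continuous, whence $B_q$ is norm-closed and $0$ lies in the norm-interior of $B_q$ (as $q(0)=0<1$). The inclusion $B_q-u\subset K_q$ established during the proof of Theorem~\ref{main} then gives $-u\in\intr(B_q-u)\subset\intr M$.

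For (iv), the relation $Q+R=I$ rearranges to
$$ Qx+Qy-Q(x+y) \;=\; R(x+y)-Rx-Ry \;=\; -\bigl(Rx+Ry-R(x+y)\bigr), $$
which lies in $-N$ by subadditivity of $R$. Since $-u\in M$ and $M$ is a cone, $-N=\R_+(-u)\subset M$, so $Qx+Qy-Q(x+y)\in M$, i.e.\ $Q$ is subadditive. The only step really needing care is the topological part of (ii): properness of $M$ must be verified in the original norm rather than in $q^s$, which is where continuity of the retraction $R$ is essential.
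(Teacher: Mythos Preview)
Your argument is correct and follows essentially the same route as the paper: verify the asymmetric-norm axioms for $q$ from the retraction hypotheses, derive properness from $RQ=0$ (the paper phrases this equivalently via $(I-R)^2=I-R$), identify $M=K_q$ by idempotence, invoke Theorem~\ref{main} for (ii)--(iii), and use $-N\subset M$ for (iv). Your explicit use of the continuity of $R$ to obtain $-u\in\intr M$ in the \emph{original} norm, rather than merely in $q^s$, is actually a point the paper glosses over by citing Theorem~\ref{main} directly.
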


\begin{proof}

(i) Obviously, $R$ can be represented in the form $Rx=q(x)u$ with $u\in N$
and $q(u)=1$, where $q$ is subadditive and positively homogeneous since $R$ is so.

If $q(x)=q(-x)=0,$ then by $Q+R=I$ we have $x=Qx\in M$ $-x=Q(-x)\in M,$
hence $x=0$ since $M$ is pointed by definition.

Since $Q=I-R$ is idempotent, we have
$$ x-q(x)u = (I-R)x=(I-R)^2x =(I-R)(I-R)x= (I-R)x -R(I-R)x=$$
$$x-q(x)u-q(x-q(x)u)u,\;\forall \,x.$$
Hence
$$q(x-q(x)u)=0,\;\;\forall\; x\in X,$$
and thus $q$ is a proper asymmetric norm.

(ii) If $x\in K_q$ , then $x=Qx+q(x)u= Qx \in M.$ Thus $K_q\subset M$.
Since $RQ=0$ we have $R(Qx)=q(Qx)u=0\;\forall \,x\in X,$ that is, $Qx\in K_q,\;
\forall \; x\in X.$ Hence $M=Q(X)\subset K_q.$

From Theorem \ref{main}, $K_q=M$ is a proper cone and $-u\in \intr M$.

(iii) From the same Theorem, \ref{main} $q= \gauge (u+K_q) = \gauge (M+u).$

(iv) We have to see that
\begin{equation}\label{iv}
 Qx+Qy-Q(x+y) \in M,\;\;\forall \;x,\,y\in X.
\end{equation}
From $Q=I-R$ the left hand side of this relation rewrites as
$$ x-Rx+y-Ry-(x+y-R(x+y))=-Rx-Ry +R(x+y).$$
Now, $-Rx-Ry+R(x+y)\in -N$ as $R$ is subadditive. Since by (ii) 
$-N =\{-tu:\,t\in \R_+\}\subset M,$
we have (\ref{iv}) fulfilled.

\end{proof}

\begin{theorem}\label{as2}

If $M\subset X$ is a proper cone with $-u\in \intr M$ and 
$q=\gauge (M+u)$, then
\begin{enumerate}
\item [(i)]  $$Rx=q(x)u ,\; x\in X,\;\textrm{and}\; Q=I-R$$
are mutually polar retractions with $R(X)=\{tu:t\in \R_+\}$ and
$Q(X)=M.$
\item [(ii)] $Q$ and $R$ are subadditive.
\end{enumerate}
\end{theorem}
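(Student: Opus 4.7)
The plan is to read off the construction from Theorem \ref{main} and then verify the retraction and subadditivity clauses one by one. Since $M$ is a proper cone with $-u\in\intr M$, the implication (iv) $\Rightarrow$ (i) of Theorem \ref{main} already tells us that $q=\gauge(M+u)$ is a proper asymmetric norm with $q(u)=1$ and $K_q = M$; in particular the identity $x-q(x)u\in M$ for all $x\in X$ is at our disposal from the start, and $q$ is continuous with respect to the ambient norm because $M+u$ is a convex set with $0$ in its interior.

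For part (i) I would verify each clause of Definition \ref{retract} for $R$ and $Q$ in turn. For $R$: positive homogeneity is inherited from $q$; idempotence follows from $R(Rx)=q(q(x)u)u=q(x)u=Rx$ since $q(u)=1$; the range $R(X)$ is the closed ray $\{tu:t\geq 0\}$, a closed pointed convex cone, and since this ray has empty interior in $X$ we have $\bdr R(X)=R(X)$, so condition (iv) is automatic. For $Q=I-R$: continuity and positive homogeneity are inherited; idempotence reduces to $R(Qx)=0$, which holds because $Qx=x-q(x)u\in K_q$; and the range is $Q(X)=\{x-q(x)u:x\in X\}=K_q=M$ by Theorem \ref{main}.

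The one clause requiring genuine geometric thought is condition (iv) for $Q$. The idea is: if $x\notin M$ then $q(x)>0$, the normalised vector $x/q(x)$ lies in $\Sigma_q$, and from the proof of Theorem \ref{main} we have $\Sigma_q=\bdr M + u$, so $x/q(x)-u\in\bdr M$. Multiplying by the positive scalar $q(x)$ keeps us on the boundary of the cone $M$, giving $Qx=x-q(x)u\in\bdr M$. Mutual polarity of $Q$ and $R$ is then immediate: $Q+R=I$ holds by construction, $QR=0$ follows from the idempotence of $R$, and $RQ=0$ is a rephrasing of $q(Qx)=0$.

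For part (ii), subadditivity of both retractions reduces to that of the asymmetric norm $q$. Indeed $Rx+Ry-R(x+y)=(q(x)+q(y)-q(x+y))u$, a non-negative scalar multiple of $u\in N$, while $Qx+Qy-Q(x+y)=(q(x+y)-q(x)-q(y))u$ is a non-negative multiple of $-u$, hence lies in $M$ since $-u\in\intr M\subset M$ and $M$ is a cone. The principal obstacle in the whole argument is the verification of the boundary condition (iv) for $Q$ outlined above; everything else is either a formal calculation or a direct consequence of properties already established in Theorem \ref{main}.
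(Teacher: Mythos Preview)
Your proof is correct and follows the same overall strategy as the paper: invoke Theorem \ref{main} to identify $q$ as a proper asymmetric norm with $K_q=M$, verify idempotence of $Q=I-R$ via $q(x-q(x)u)=0$, read off the ranges, and reduce the subadditivity of both retractions to the subadditivity of $q$ together with $-u\in M$. Your write-up is in fact more complete than the paper's, which does not explicitly check clause (iv) of Definition \ref{retract}; your argument that $x\notin M$ forces $q(x)>0$ and hence $Qx=q(x)\bigl(x/q(x)-u\bigr)\in\bdr M$ via $\Sigma_q=\bdr M+u$ is a genuine addition.
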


\begin{proof}

(i) From Theorem \ref{main}, $q$ is a proper asymmetric norm
with
$$K_q=M.$$
Hence
$$(I-R)^2x =(I-R)(I-R)x= (I-R)x -R(I-R)x=$$
$$x-q(x)u-q(x-q(x)u)u= x-q(x)u=(I-R)x ,\;\forall \,x\in X,$$
and it follows that $Q=I-R$ is idempotent.

$Qx=x-q(x)u=x$
if and only if $q(x)=0$ i.e., if $x\in M$. Thus $Q(X)=M.$

(ii) $R$ is subadditive and $-R(X)=-\{tu:t\in \R_+\}\subset M$
and repeating the argument in the proof of item (iv) of Theorem \ref{as1}, it
follows that $Q$ is subadditive too.

\end{proof}

In conclusion:  {\bf Two mutually polar retractions $Q,\,R :\,X\to X$ with
$\dim R(X)=1\;,$ $R(x)=q(x)u,\;q(u)=1$ are subadditive, if and only if 
$Q(X)$ is a proper cone, $-u\in \intr Q(X)$ and $q=\gauge(M+u)$.}
 

\vspace{2mm}


\begin{thebibliography}{4}

\bibitem{Cobzas2013}
S.~Cobzas.
\newblock {\em Functional Analysis in Assymmetric Normed Spaces}.
\newblock {Birkhauser, 2013}.

\bibitem{Conradie2015}
J.~Conradie.
\newblock Assymmetric norms, cones and partial orders.
\newblock {\em Topology and Appl.}, 193:100--115, 2015.

\bibitem{NemethNemeth2020}
A.~B. N\'emeth and S.~Z. N\'emeth.
\newblock {\em Subadditive retractions on cones and asymmetric vector norms}.
\newblock {ArXiv: 2005.10508v, 2020}.

\bibitem{Nemeth2020}
A.~B. N\'emeth.
\newblock {\em Mutually polar retractions on convex cones}.
\newblock {ArXiv: 2012.4530v1 , 2020}.

\end{thebibliography}
\end{document}